\newtheorem{theorem}{Theorem}
\newtheorem{definition}{Definition}
\newtheorem{corollary}{Corollary}
\newcommand{\nn}{\nonumber}
\newcommand{\dd}{{\rm d}}
\newcommand{\bp}{\bar\partial}
\newcommand{\w}{\wedge}
\newcommand{\End}{{\rm End}}
\newcommand{\tr}{{\rm tr}}
\newcommand{\IR}{\mathbb{R}}
\newcommand{\be}{\begin{equation}}
\newcommand{\ee}{\end{equation}}
\def\bea#1\eea{\begin{align}#1\end{align}}
\title{Restrictions of Heterotic $G_2$ Structures and Instanton Connections}
\author[a]{Xenia de la Ossa,}
\author[b]{Magdalena Larfors,}
\author[c,d,e]{Eirik E.~Svanes}
\affiliation[a]{Mathematical Institute, Oxford University\\Andrew Wiles Building, Woodstock Road\\Oxford OX2 6GG, UK }
\affiliation[b]{Department of Physics and Astronomy,Uppsala University\\ SE-751 20 Uppsala, Sweden}
\affiliation[c]{Sorbonne Universit\'es, CNRS, LPTHE, UPMC Paris 06, UMR 7589, 75005 Paris, France}
\affiliation[d]{Sorbonne Universit\'es, Institut Lagrange de Paris, 98 bis Bd Arago, 75014 Paris, France}
\emailAdd{delaossa@maths.ox.ac.uk, magdalena.larfors@physics.uu.se, esvanes@lpthe.jussieu.fr}
\abstract{This note revisits recent results regarding the geometry and moduli of solutions of the heterotic string on manifolds $Y$ with a $G_2$ structure. In particular, such heterotic $G_2$ systems can be rephrased in terms of a differential $\check {\cal D}$ acting on a complex $\check\Omega^*(Y , {\cal Q})$, where ${\cal Q}=T^*Y\oplus\End(TY)\oplus\End(V)$ and $\check {\cal D}$ is an appropriate projection of an exterior covariant derivative ${\cal D}$ which satisfies an instanton condition.  The infinitesimal moduli are further parametrised by the first cohomology $H^1_{\check {\cal D}}(Y,{\cal Q})$. We proceed to restrict this system to manifolds $X$ with an $SU(3)$ structure corresponding to supersymmetric compactifications to four dimensional Minkowski space, often referred to as Strominger--Hull solutions. In doing so, we derive a new result:  the Strominger-Hull system is equivalent to a particular holomorphic Yang-Mills covariant derivative on ${\cal Q}\vert_X=T^*X\oplus\End(TX)\oplus\End(V)$.}
\begin{document}

\maketitle
\flushbottom

%%%%%%%%%%%%%%%%%%%%%%%%%%%
\section{Introduction}
Heterotic systems are the geometric structures appropriate to $N=1$ supersymmetric heterotic string compactifications and these have interesting mathematical properties.
We have three goals with this note regarding heterotic systems.  First, we hope it may serve as a friendly introduction to our recent work on the geometry and moduli of heterotic string compactifications \cite{delaOssa:2014cia,delaOssa:2014lma,delaOssa:2016ivz,delaOssa:2017pqy}.  Second, we wish to highlight one of  the main features of the work presented in these references, which is an equivalence between $d$-dimensional {\it heterotic systems} and certain nilpotent operators obtained as an appropriate projection of an exterior covariant derivative ${\cal D}_d$ which acts on a bundle ${\cal Q}_d$.  This bundle will be defined in the text, but in brief ${\cal Q}_d$ is the bundle over a $d$-dimensional manifold $Y$ whose sections correspond to one parameter subgroups of the symmetries of the heterotic system. 
The correspondence between heterotic systems and nilpotent operators can be used, in six \cite{delaOssa:2014cia,Anderson:2014xha,Garcia-Fernandez:2015hja} and seven \cite{delaOssa:2016ivz,delaOssa:2017pqy} dimensions, to show that the infinitesimal moduli of heterotic systems correspond to classes in the first cohomology group of the associated operators. 
The third goal of this paper is to give a relation between the 
operators ${\cal D}_d$ in seven and six dimensions, and give a new perspective of the Strominger--Hull system \cite{Strominger:1986uh,Hull:1986kz}. 
In particular we show how to recover the Strominger--Hull system for a six dimensional manifold $X$ (that is the heterotic system in six dimensions) from the heterotic system on a seven dimensional manifold $Y = \IR\times X$.  In deriving this result, we show that the Strominger-Hull system is equivalent to a particular holomorphic Yang-Mills covariant exterior derivative on ${\cal Q}\vert_X=T^*X\oplus\End(TX)\oplus\End(V)$.

%%%%%%%%%%%%%%%%%%%%%%%%%

\section{The heterotic $G_2$ system and its infinitesimal moduli} \label{sec:hetg2sys}

Heterotic $G_2$ systems are geometric structures relevant to {\it three} dimensional \hbox{$N=1$} supersymmetric compactifications of heterotic string theories.  Heterotic string theories are ten dimensional, and one makes the natural ansatz that the ten dimensional manifold $M_{10}$ has the form 
\[
M_{10} = M_{1,2}\times Y
\]
where $M_{1,2}$ is maximally symmetric, and one aims to understand the effective field theory on $M_{1,2}$.   In this section we introduce heterotic $G_2$ systems. Our presentation will be brief and the reader is referred to  \cite{delaOssa:2016ivz,delaOssa:2017pqy} for more details.
We define a {\it heterotic $G_2$ system} \cite{delaOssa:2017pqy} as a quadruple $([Y,\varphi], [V, A], [TY, \theta], H)$, where $Y$ is a seven dimensional manifold with a $G_2$ structure $\varphi$, $V$ is a bundle  on $Y$ with connection $A$,  $TY$ is the tangent bundle of $Y$ with connection $\theta$ \footnote{We have changed notation compared to \cite{delaOssa:2017pqy}: $\theta_{\rm here} = \tilde{\theta}_{\rm there}$,  $\zeta_{\rm here}= \theta_{\rm there}$.}.  The three form $H$ is defined by
\be
H = \dd B + \frac{\alpha'}{4}\, \big({\cal CS}(A) - {\cal CS}(\theta)\big)~,\label{eq:anom}
\ee
where ${\cal CS}(A)$ and  ${\cal CS}(\theta)$ are the Chern-Simons three forms for the connections $A$ and $\theta$ respectively, $\alpha'$ is a constant related to the string length scale\footnote{This constant is assumed to be small so that strings appear point-like at low energies.}, and $B$ is the so-called $B$ field.  Note that the $B$ field is not well defined
as it transforms under  gauge transformations of the bundles, however $B$ transforms in such a way that $H$ is a well defined three form\footnote{For a discussion see \cite{green1984anomaly}.}.
Equation \eqref{eq:anom} is called the anomaly cancelation condition and the exterior derivative gives the 
{\it heterotic Bianchi identity}
\begin{equation}
\dd H=\tfrac{\alpha'}{4}\left(\tr\,F\wedge F-\tr\,R\wedge R\right)\:.
\label{eq:hetBI}
\end{equation}
The geometry of these objects is constrained by the requirement that we preserve $N=1$ supersymmetry on $M_{1,2}$.  In particular, the $G_2$ structure $\varphi$ on $Y$ is required to be {\it integrable}. To understand what this means, consider the exterior derivative $\dd$ acting on the three form $\varphi$ and the four form $\psi = *\varphi$ (where the Hodge star operator is given by the metric defined by the $G_2$ structure $\varphi$).
We can write these as
\begin{align*}
\dd\varphi &= \tau_0\,\psi + 3\, \tau_1\wedge\varphi + *\tau_3~,\\
\dd\psi &= 4\, \tau_1\wedge\psi + *\tau_2~,
\end{align*}
where the $i$-forms $\tau_i$  are {\it torsion classes} which are uniquely determined by $\varphi$.  Note that $\tau_3$ and $\tau_2$ are in the $\bf 27$ and $\bf 14$ irreducible representations of $G_2$ respectively. 
A $G_2$ structure $\varphi$ is integrable  when $\tau_2 = 0$ \footnote{The term integrable $G_2$ structure was coined in Ref.~\cite{fernandez1998dolbeault}, and refers to a restriction of the intrinsic torsion of the geometry, specifically $\tau_2=0$. Integrable $G_2$ geometry shares certain features with even dimensional complex geometry. In particular, one may define a {\it canonical differential complex}  $\check\Omega^*(Y)$ as a sub complex of the de Rham complex \cite{fernandez1998dolbeault}, and the associated cohomologies $\check H^*(Y)$ have similarities with the Dolbeault complex of complex geometry.}.  The connections $A$ and $\theta$ are required to be {\it instanton connections},\footnote{For examples of $G_2$ instantons, see \cite{2011arXiv1109.6609W,2015arXiv150501080W,2015arXiv151003836M,2011arXiv1101.0880E,2013arXiv1310.7933E}. Note that for an instanton connection on a bundle $E$ over an integrable $G_2$ manifold $Y$, one can define an elliptic bundle valued differential complex $\check\Omega^*(Y,E)$ \cite{carrion1998generalization}. } that is
\be \label{eq:inst}
F\wedge\psi=0\:,\qquad R\wedge\psi = 0~,
\ee
where $F$ is the curvature two form of the connection $A$ on the bundle $V$, and $ R$ is the curvature two form of the connection $\theta$ on $TY$.   Finally, supersymmetry also imposes a constraint on the three form $H$ of equation \eqref{eq:anom} which is
\begin{equation}
\label{eq:Htor}
H = \frac{1}{6}\, \tau_0\, \varphi - \tau_1\lrcorner\psi - \tau_3~.
\end{equation}
It is worth remarking that in terms 
of this, we can write the structure equations for the $G_2$ structure above when $\tau_2 = 0$ as
\begin{align}
\dd\varphi &= 
\tau_0\,\psi + 3\, \tau_1\wedge\varphi + *\tau_3
= i_H(\varphi)
~,
\label{eq:Intphi} 
\\
\dd\psi &= 
  4\, \tau_1\wedge\psi = i_H(\psi)
 ~,\label{eq:Intpsi}
\end{align}
One can check that $H$ is the torsion of the unique metric connection compatible with the $G_2$ structure, that is 
\[\nabla \varphi = 0~,\] 
which is totally antisymmetric and which exists if and only if the $G_2$ structure is integrable.

To be precise, we collect the above discussion in a formal definition:
\begin{definition}
\label{df:HetG2}
Let $Y$ be a manifold with a $G_2$ structure $\varphi$, $V$ a bundle on $Y$ with connection $A$, and $TY$ the tangent bundle of $Y$ with connection $\theta$. A heterotic $G_2$ system $\big( [Y,\varphi],[V,A],[TY,\theta],H \big)$  has an integrable $G_2$ structure on $Y$ and the curvatures $F$ and $R$ of the connections $A$ and $\theta$ respectively satisfy the instanton conditions \eqref{eq:inst}, and the intrinsic torsion $H$ as defined by \eqref{eq:Htor} satisfies the heterotic Bianchi identity \eqref{eq:hetBI}.
\end{definition}
We remark that this system is slightly more general than that corresponding to an $N=1$ heterotic string compactification as the latter also demands that the torsion class $\tau_1$ is $\dd$-exact, $\tau_1= \dd\phi/2$, where $\phi$ is the dilaton. We do not assume this, in fact, a heterotic $G_2$ system only satisfies
$\dd\tau_1\wedge\psi = 0$, as can be easily seen from equation \eqref{eq:Intpsi}. 

We now explain how to reformulate the $G_2$ system in terms of a bundle ${\cal Q} $ on which there is 
an exterior covariant derivative ${\cal D}$ and an appropriate projection $\check{\cal D}$ which is nilpotent, where we introduce the simplified notation ${\cal Q} \equiv {\cal Q}_7$ and ${\cal D} \equiv {\cal D}_7$ when $d =7$.  Consider the bundle $\cal Q$ which is {\it topologically} defined by
\[
{\cal Q} = T^*Y\oplus\End(TY)\oplus\End(V)
~,\]
and the linear operator $\cal D$ defined by \cite{delaOssa:2017pqy}
 \begin{equation}
  {\cal D} =  \left(\,  
 \begin{matrix}
~ \dd_{\zeta} &  ~~{\cal R} & ~  -{\cal F}~ 
 \\ ~ {\cal  R} & ~~~\dd_{\theta} & ~~0
 \\ ~ {\cal F} & ~~ 0  & ~~~\dd_A
 \end{matrix}
 \, \right)~,\label{eq:calD}
 \end{equation}
which acts on the space of forms on $Y$ with values in $\cal Q$. 
$\cal D$ satisfies the Leibniz rule  and is therefore a covariant exterior derivative on $\cal Q$. The component $\dd_A$ is an exterior covariant derivative on $V$ with connection one form $A$. The components  $\dd_{\zeta}$ and $\dd_{\theta}$ are both exterior covariant derivatives on $TY$ with connection one forms $\zeta, \theta$ respectively. 
The connection one form $\zeta$ is related to a metric connection $\nabla$, which is compatible with the $G_2$ structure, that is, $\nabla\varphi = 0$.  If the Gamma symbols of the connection $\nabla$ are  
$\Gamma$,  the connection one form $\zeta$ is then given by
\[ 
\zeta_a{}^b = \Gamma_{ac}{}^b\, \dd x^c~,
\]
The linear operator $\cal F$ acts on forms with values in $\cal Q$ as follows. Let $M\in \Omega^p(Y, T^*Y)$, and $\alpha\in \Omega^p(Y,{\rm End}(V)$.  Then 
 \begin{align*}
 {\cal F} :\quad \Omega^p(Y, T^*Y)\oplus \Omega^p(Y, {\rm End}(V))
 &\longrightarrow 
 \Omega^{p+1}(Y, {\rm End}(V))\oplus \Omega^{p+1}(Y, T^*Y)
 \\
 \left(\begin{matrix}
 M  \\ \alpha 
 \end{matrix}\right)\qquad\qquad\quad
 &\mapsto
 \quad\qquad \qquad  
\left(\begin{matrix}
 {\cal F}(M) \\  {\cal F}(\alpha) 
 \end{matrix}\right)\qquad
  \end{align*}
  where 
  \begin{align*}
  {\cal F}(M) &= (-1)^{p}\, g^{ab}\, M_a\wedge F_{bc}\, \dd x^c = (-1)^{p}\, i_M(F)~,
  \\
  {\cal F}(\alpha)_a &= (-1)^{p}~ \frac{\alpha'}{4}\, \tr (\alpha\wedge F_{ab}\, \dd x^b)~.
  \end{align*}
  The map $\cal  R$ is defined similarly, but instead it acts on forms valued in $\Omega^p(Y, T^*Y)\oplus \Omega^p(Y, {\rm End}(TY)$.

Next, we construct a  differential operator $\check{\cal D}$ as the projection of ${\cal D}$ onto an appropriate irreducible representation of $G_2$.  The projection depends on the degree of the form that the operator ${\cal D}$ acts on, and it is defined as follows \footnote{We refer to \cite{delaOssa:2017pqy} for more details.}.
 \begin{definition}\label{def:g2proj}
The differential operator $\check{\cal D}$ is given by
\begin{align*}
\check{\cal D}&: \Omega^0(Y,{\cal Q})\rightarrow\Omega^1(Y,{\cal Q})~,\qquad \qquad 
\check{\cal D}Z= {\cal D} Z ~, \qquad\quad Z\in\Omega^0(Y,{\cal Q})
~,\\
\check{\cal D}&: \Omega^1(Y,{\cal Q})\rightarrow\Omega^2_7(Y,{\cal Q})~,\qquad\qquad 
\check{\cal D}Z = \pi_7({\cal D}Z)
~, \quad Z\in\Omega^1(Y,{\cal Q})
~,\\
\check{\cal D} &: \Omega^2(Y,{\cal Q})\rightarrow\Omega^3_1(Y,{\cal Q})~,\qquad\qquad
\check{\cal D}Z = \pi_1({\cal D} Z)~, \quad Z\in\Omega^2(Y,{\cal Q})~.
\end{align*}
where the $\pi_i$'s denote projections onto the corresponding subspace.
\end{definition}
\noindent 
This definition extends the definition of the operators $\check d$ appropriate for deformations of manifolds with an integrable $G_2$ structure which where first considered in \cite{fernandez1998dolbeault, carrion1998generalization} and used in \cite{delaOssa:2016ivz}. 

The fact that $\check{\cal D}^2 = 0$ encodes the heterotic $G_2$ system, is stated in theorems 6 and 7 in our paper \cite{delaOssa:2017pqy}.  In this note, we put these results together in the following theorem.

\begin{theorem}
\label{tm:alpha}
Let $Y$ be a manifold with a $G_2$ structure $\varphi$,  $V$ a bundle on $Y$ with connection $A$, and $TY$ the tangent bundle of $Y$ with connection $\theta$. Let $\zeta$ be a connection one-form on $TY$ compatible with the $G_2$ structure, that is, $\zeta$ is related to a metric connection $\nabla$ with $\nabla\varphi = 0$ and such that its $\Gamma$ symbols are related to $\zeta$ by
\be
 \zeta_a{}^b = \Gamma_{ac}{}^b\, \dd x^c~.
 \label{eq:zeta}
 \ee
Consider the exterior covariant derivative $\check{\cal D}$ defined by  \eqref{eq:calD} and definition \ref{def:g2proj}. Then  $~\check{\cal D}^2= 0$ if and only if 
$\big( [Y,\varphi],[V,A],[TY,\theta],H \big)$ is a heterotic $G_2$ system
and we choose 
\be \Gamma_{ab}{}^c = \Gamma^{\scriptstyle{LC}}_{~ab}{}^{~ c}
+ \frac{1}{2}\, H_{ab}{}^c ,  
\label{eq:zetaconn}
\ee
where $H$ is given by equation \eqref{eq:Htor}.
This statement is true to all orders in the perturbative $\alpha'$ expansion.
\end{theorem}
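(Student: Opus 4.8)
The plan is to compute $\check{\cal D}^2$ degree by degree and read off the required conditions block by block. Because $\check{\cal D}$ is a degree-dependent projection of ${\cal D}$, I would split the verification into the two nontrivial compositions: $\check{\cal D}^2 = \pi_7\,{\cal D}^2$ on $\Omega^0(Y,{\cal Q})$, and $\check{\cal D}^2 = \pi_1\,{\cal D}\,\pi_7\,{\cal D}$ on $\Omega^1(Y,{\cal Q})$. The first step is to expand ${\cal D}^2$ as a $3\times 3$ matrix of operators using \eqref{eq:calD}. The diagonal blocks produce curvatures: $\dd_\theta^2$ and $\dd_A^2$ act as the Riemann curvature $R$ and the gauge curvature $F$, while $\dd_\zeta^2$ produces the curvature $R_\zeta$ of the chosen connection. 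The off-diagonal blocks combine two kinds of contributions: genuine differential terms of the form $\dd_A{\cal F}\pm{\cal F}\dd_\zeta$ (and the analogue with ${\cal R}$), and the quadratic terms ${\cal F}{\cal F}$, ${\cal R}{\cal R}$, which by the definitions of ${\cal F}$ and ${\cal R}$ contract to $\tfrac{\alpha'}{4}\tr F\w F$ and $\tfrac{\alpha'}{4}\tr R\w R$.

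For the diagonal $\End(TY)$ and $\End(V)$ blocks I would invoke the mechanism of \cite{carrion1998generalization}: on an integrable $G_2$ manifold, the projected covariant exterior derivative built from an instanton connection squares to zero, and conversely the relevant projection of $\dd_A^2 = F$ (resp.\ $\dd_\theta^2 = R$) vanishes on all test sections precisely when $\pi_7(F)=\pi_7(R)=0$, i.e.\ when the instanton conditions \eqref{eq:inst} hold. This isolates $F\w\psi = R\w\psi = 0$ as the content of the $\End$ diagonal blocks. The off-diagonal differential terms package the covariant exterior derivatives of $F$ and $R$, which reduce via the Bianchi identities $\dd_A F = 0$, $\dd_\theta R = 0$ to terms that either vanish or are reabsorbed by the choice of $\zeta$, so they impose no new constraint there.

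The crux is the $T^*Y$ block and the mixing it induces. Here $\dd_\zeta^2 = R_\zeta$ must combine with the trace terms $\tfrac{\alpha'}{4}(\tr R\w R - \tr F\w F)$ produced by ${\cal R}{\cal R}-{\cal F}{\cal F}$. Writing $\zeta = \Gamma^{LC} + \tfrac12 H$ as in \eqref{eq:zetaconn}, the curvature $R_\zeta$ carries a piece proportional to $\dd H$, and the surviving projection of the $T^*Y$ block vanishes exactly when $\dd H = \tfrac{\alpha'}{4}(\tr F\w F - \tr R\w R)$, which is the heterotic Bianchi identity \eqref{eq:hetBI}. That the torsion of $\zeta$ be totally antisymmetric and equal to the $H$ of \eqref{eq:Htor} is forced by demanding both $\nabla\varphi = 0$ and this matching of curvatures; since a $G_2$-compatible connection with totally antisymmetric torsion exists and is unique iff $\tau_2 = 0$, this simultaneously pins down \eqref{eq:zetaconn} and feeds in integrability. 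Integrability $\tau_2=0$ also enters directly in the degree-one computation, where the intermediate $\pi_7$ and the structure equations \eqref{eq:Intphi}--\eqref{eq:Intpsi} must conspire so that $\pi_1{\cal D}\pi_7{\cal D}$ reproduces precisely the scalar $\dd H$ equation.

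I expect the main obstacle to be the degree-one composition $\pi_1\,{\cal D}\,\pi_7\,{\cal D}$, because the intermediate projection $\pi_7$ prevents the two factors of ${\cal D}$ from collapsing into the matrix ${\cal D}^2$. Disentangling it requires the $G_2$ contraction identities relating wedge products with $\varphi$ and $\psi$ to the projections onto the $\mathbf{1}$, $\mathbf{7}$, $\mathbf{14}$, $\mathbf{27}$ representations, together with \eqref{eq:Intphi}--\eqref{eq:Intpsi}. Showing that, after the middle $\pi_7$, the trace terms and the $R_\zeta$ term recombine under $\pi_1$ into exactly the heterotic Bianchi identity --- with no spurious leftover in the $\mathbf{27}$ or $\mathbf{7}$ channels --- is the computational heart of the argument, and is where the specific form \eqref{eq:zetaconn} of $\zeta$ is essential. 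Finally, since the trace terms carry an explicit factor of $\alpha'$, the identity closes order by order in the $\alpha'$ expansion.
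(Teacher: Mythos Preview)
Your block-by-block strategy matches the paper's, and the identification of the $(1,1)$ block as the source of the heterotic Bianchi identity \eqref{eq:hetBI} is correct. However, you have misidentified the block structure of ${\cal D}^2$. The maps ${\cal F}$ and ${\cal R}$ exchange $T^*Y$ with $\End(V)$ and $\End(TY)$ respectively, so the quadratic pieces $\check{\cal F}^2$ and $\check{\cal R}^2$ land on the \emph{diagonal}: the $(3,3)$ block is $\check\dd_A^2-\check{\cal F}^2$, the $(2,2)$ block is $\check\dd_\theta^2+\check{\cal R}^2$, and cross terms $\check{\cal F}\check{\cal R}$, $\check{\cal R}\check{\cal F}$ appear in the $(2,3)$ and $(3,2)$ slots (cf.\ \eqref{eq:op2}). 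Consequently the $\End(V)$ diagonal is not simply the projected curvature $\pi_7(F)$, and Carri\'on's mechanism does not by itself isolate $F\wedge\psi=0$ from the vanishing of that block: there is an $\alpha'$-weighted contamination from $\check{\cal F}^2$.

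This is precisely why the paper's proof of the converse direction is organised as an order-by-order argument in $\alpha'$, with a genuine extra input: one \emph{assumes} that at $\alpha'=0$ the connection already satisfies $F_0\wedge\psi=0$ (a physically motivated boundary condition). At zeroth order $\check{\cal F}^2$ vanishes and Carri\'on gives the instanton condition and integrability of the $G_2$ structure; one then bootstraps to all orders. Your final sentence that ``the identity closes order by order'' glosses over this: without the $\alpha'=0$ seed assumption and the inductive step, the vanishing of $\check\dd_A^2-\check{\cal F}^2$ does not force $F\wedge\psi=0$. Similarly, integrability $\tau_2=0$ in the paper is extracted from this same $(3,3)$ analysis rather than being fed in through the uniqueness of the skew-torsion connection as you propose; your route there is circular, since the choice \eqref{eq:zetaconn} and the identification of its torsion with the $H$ of \eqref{eq:Htor} already presuppose $\tau_2=0$.
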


We will not give the full proof of this theorem here. For more details the reader is referred to \cite{delaOssa:2017pqy}.
However several comments are in order.  It is an easy check to see that the heterotic $G_2$ system implies  $~\check{\cal D}^2= 0$. The converse, is a little more involved. To see this we compute $\check{\cal D}^2$, which gives
\be
\check{\cal D}^2 = 
\begin{pmatrix}
~\check\dd_{\zeta}^2 + \check{\cal R}^2 - \check{\cal F}^2
&  
~~\check{\dd_\zeta}\,\check{\cal R}+ \check{\cal R}\,\check{\dd_\theta} 
& 
~~- (\check{\dd_\zeta}\,\check{\cal F}+ \check{\cal F}\,\check\dd_A)~
 \\[3pt] 
 ~~\check{\cal R}\,\check{\dd_\zeta} + 
 \check{\dd_\theta}\, \check{\cal R}~
 & 
 ~~~\check\dd_{\theta}^2 + \check{\cal R}^2
 & 
 ~~- \check{\cal R}\,\check{\cal F}
 \\[3pt] 
 ~~\check{\cal F}\,\check{\dd_\zeta} + 
 \check\dd_A\, \check{\cal F}~ 
 & 
 ~~   \check{\cal F}\,\check{\cal R}
 & 
 ~~~\check\dd_A^2 - \check{\cal F}^2
\end{pmatrix}\label{eq:op2}
\ee
 The proof starts with the $(3,3)$ component of $\check{\cal D}^2$. One needs to assume that when the constant $\alpha'$ vanishes, one has the condition that $F_0\wedge\psi = 0$, where $F_0$ is the curvature of the one form connection $A$ on the bundle $V$ when $\alpha' = 0$.  This condition is motivated by the physics of heterotic string compactifications. Working order by order in the perturbative $\alpha'$  expansion, one finds that $F\wedge\psi=0$ to all orders in $\alpha'$. Moreover one finds that the $G_2$ structure must be integrable.  This means that the connection  one form $\zeta$ must be compatible with the integrable $G_2$ structure.  Similar considerations apply for the $(2,2)$ component, that is, $R\wedge\psi=0$.  With the exception of the component $(1,1)$, it is now straightforward to prove that all other components vanish.  Note that the vanishing of the components $(1,2)$, $(1,3)$, $(2,1)$, $(3,1)$ require also the Bianchi identities $\dd_A\, F = 0$ and $\dd_\theta\, R = 0$. Note that integrability also implies that $\nabla\varphi=0$, where $\nabla$ has connection symbols given by \eqref{eq:zetaconn} and $H$ is again given by \eqref{eq:Htor}. Finally the vanishing of the component $(1,1)$ then {\it implies} the Bianchi identity for $H$. 
  
 The necessity of the anomaly cancelation condition for $\check{\cal D}^2=0$ is  a rather striking result as this represents a new interpretation of the anomaly cancelation condition.  It means that the Bianchi identity of the anomaly, together with the conditions that $\varphi$ is an integrable $G_2$ structure on $Y$,  the connections $A$ on $V$ and $\theta$ on $TY$ are instantons, 
 are the necessary and sufficient condition for there to exist a differential complex $\check\Omega^*(Y, {\cal Q})$ 
\[
0\rightarrow\Omega^0(Y, {\cal Q})
\xrightarrow{\check{\cal D}}\Omega^1(Y, {\cal Q})
\xrightarrow{\check{\cal D}}\Omega_{\bf 7}^2(Y, {\cal Q})
\xrightarrow{\check{\cal D}}\Omega_{\bf 1}^3(Y, {\cal Q})
\rightarrow 0~,
\]
where $\Omega_{\bf r}^n(Y, {\cal Q})$ is the space of $\cal Q$ valued  $n$-forms in the $\bf r$ irreducible representation of $G_2$.
Moreover, it has also been shown (see \cite{delaOssa:2017pqy,Clarke:2016qtg}) that this complex is elliptic, which means that the $\check{\cal D}$ cohomology of $\cal Q$ is finite dimensional provided  that $Y$ is compact.  
  
The only part of the proof of the theorem that may not be rigorous mathematically has to do with the need to use a perturbative 
 $\alpha'$ expansion. This is motivated by  superstring theory. In this setting,  we recall that $\alpha'$ is related to the string length, which is conventionally assumed to be small so that strings appear point-like at low energies. With this assumption, superstring theory reduces to a supergravity theory with an infinite tower of $\alpha'$ corrections. The analysis we have performed is sensible under this premise.

Another interesting result in \cite{delaOssa:2016ivz, delaOssa:2017pqy} pertains to the infinitesimal moduli of heterotic $G_2$ systems.
Consider a family of heterotic systems $\big( [Y,\varphi],[V,A],[TY,\theta],H \big)_t $ parametrised by $t\in\IR$  with 
\[
\big( [Y,\varphi],[V,A],[TY,\theta],H \big)_{t=0} = \big( [Y,\varphi],[V,A],[TY,\theta],H \big)~.
\]
Let ${\cal Z}_t\in \Omega^1(Y,{\cal Q})$ with
\be
{\cal Z} = \begin{pmatrix}
M_t \\ \kappa_t\\ \alpha_t
\end{pmatrix}~.\label{eq:Zeta}
\ee
where $M_t\in\Omega^1(Y, T^*Y)$, $\alpha_t\in\Omega^1(Y, {\rm End}(V)$, and $\kappa_t\in\Omega^1(Y,{\rm End}(TY)$ represent infinitesimal deformations of the integrable $G_2$ structure, and the instanton connections $A$ and $\theta$ respectively.  Note that we describe the variations of $\varphi$ and $\psi$ in terms of $M_t$ and these are related by
\[
\partial_t\varphi = i_{M_t}(\varphi)~,\qquad
\partial_t\psi = i_{M_t}(\psi)~.
\]
Considering $M_t$ as a matrix, the antisymmetric part of $M_t$ in the
$\bf 14$ irreducible representation of $G_2$ drops out of these relations. 
Note however that while this is not part of the moduli of the $G_2$ structure it is needed to incorporate the (gauge invariant) variations of the $B$ field as discussed below. 

We prove  in \cite{delaOssa:2017pqy} that the tangent space to moduli space of heterotic $G_2$ systems is given
by the first  $\check{\cal D}$ cohomology of the bundle $\cal Q$
\[
({\cal TM})_{Het} = H^1_{\check{\cal D}}(Y, {\cal Q})~.
\]
Remarkably, we find that in order for this to work, we have to identify the symmetric part of $M_t$ with the variations of the metric on $Y$ and the gauge invariant variations ${\cal B}_t$ of the $B$ field with the antisymmetric part of $M$, where ${\cal B}_t$
is obtained  by varying equation \eqref{eq:anom} 
\[
\partial_t H = \dd {\cal B}_t + \frac{\alpha'}{2}\,
\tr\big( F\wedge\alpha_t - R\wedge\kappa_t\big)
~.
\]
An interesting fact is that in the case of a manifold with $G_2$ holonomy
we recover the well  known result \cite{joyce1996:1,joyce1996:2,Dai2003,deBoer:2005pt} that the dimension of the moduli space of $G_2$ metrics together with the $B$ field is given by $b_3 + b_2$ of the manifold $Y$.  

%%%%%%%%%%%%%%%%%%%%%%%%%%%

\section{Reducing the heterotic $G_2$ system}

In this section we consider the heterotic $G_2$ system upon restriction of the seven dimensional manifold $Y$ to six dimensions. We thus assume that our seven dimensional spacetime $Y$ is of the form
\begin{equation}
\label{eq:Yred}
Y=\mathbb{R}\times X\:,
\end{equation}
where now $X$ is some compact six dimensional manifold with an $SU(3)$ 
structure. The aim of the section is to recover the Strominger-Hull system  \cite{Strominger:1986uh,Hull:1986kz}
on $X$, which will be defined below. 

Under the restriction, the $G_2$ structure decomposes as
\begin{align}
\varphi&=\dd r\wedge\omega+{\rm Re}(\Psi)\\
\psi&=-\dd r\wedge{\rm Im}(\Psi)+\tfrac12\omega\wedge\omega\:.
\end{align}
where $r$ denotes the coordinate along the $\mathbb{R}$-direction. Here $(\Psi,\omega)$ denote the $SU(3)$ structure on $X$ satisfying the usual relations
\begin{equation} \label{eq:su3cond}
\omega\wedge\Psi=0\:,\;\;\;\;\tfrac{i}{\vert\vert\Psi\vert\vert^2}\Psi\wedge\bar\Psi=\tfrac16\omega\wedge\omega\wedge\omega\:.
\end{equation}
Six dimensional manifolds with an $SU(3)$ structure $(\omega,\Psi)$
are almost complex hermitian manifolds with an almost complex structure $J$ determined by the three form $\Psi$, and hermitian form
\[
\omega(U, V) = g(JU, V)~,
\]
for any vector fields $U$ and $V$. With respect to the almost complex structure $J$, the fundamental form $\omega$ is type (1,1) and $\Psi$ is type $(3,0)$.

We are interested in geometries of the form
\begin{equation}
M_{10}= M_{1,3}\times X\:,
\end{equation}
where $M_{1,3}$ is taken to be maximally symmetric. Furthermore, we assume no field dependence on the $r$ coordinate, and none of the six-dimensional form fields have  components along $\IR$ in \eqref{eq:Yred}. Under these assumptions, the heterotic $G_2$ system on $Y$ must restrict to a heterotic $SU(3)$ system on $X$ corresponding to the Strominger--Hull system \cite{Strominger:1986uh,Hull:1986kz}.  In particular, the assumptions combined with the integrability of the $G_2$ structure implies \cite{delaOssa:2014lma} that 
 $\tau_0= 0$ and hence that $M_{1,3}$ must be Minkowski space,  and moreover, that the $SU(3)$ structure on $X$ is constrained by  
 \begin{align}
\label{eq:complexX}
\dd\Psi&={\overline W}_1^{\, \Psi}\wedge\Psi~,
\\
\dd(\omega\wedge\omega)&=2\,W_1^\omega\wedge\omega\wedge\omega~,\label{eq:confbal}
\end{align} 
where
\begin{equation}
\label{eq:CompLee}
{\rm Re}(W_1^{\,\Psi})=W_1^\omega\:.
\end{equation} 
The restriction equation \eqref{eq:complexX} on the $SU(3)$ structure implies that $X$ is complex, that is, $J$ is integrable.  
 The second equation \eqref{eq:confbal} defines a locally conformally balanced metric on $X$ \footnote{When $W_1^\omega$ is $\dd$-exact, it defines a conformally balanced metric on $X$.}.  
For heterotic string compactifications, the Lee-form $W_1^\omega$ is also taken to be exact, and related to the dilaton by $W_1^\omega=\dd\phi$. As for the heterotic $G_2$ system, we shall forgo this assumption. 
It is also worth pointing out that, by applying $\bp$ to equation \eqref{eq:complexX}, ${\overline W}^{\, \Psi}_1$ is always $\bp$-closed, where $\bp$ is the anti-holomorphic Dolbeault operator associated to $J$.  The intrinsic torsion $H$ of the heterotic $G_2$ system in equation \eqref{eq:Htor} is reduced to
\be
H = - \dd^c \omega = i (\partial - \bar{\partial}) \omega \in \Omega^{1,2}(X) \oplus \Omega^{2,1}(X) \; .
\label{eq:HtorX}
\ee

Consider now the restriction to six dimensions of the vector bundles $V$ and $TY$.  The instanton conditions on the connections $A$ and $\theta$,
together with the assumption that the restrictions to $X$ are independent of the coordinate $r$, lead to
\begin{align}
\label{eq:HolYMA}
F \wedge \psi&=0 \implies F \wedge \Psi = 0~,\quad{\rm and}\quad
 F \w \omega \w \omega = 0 ~,\\
\label{eq:HolYMtheta}
R \wedge \psi&=0 \implies R \wedge \Psi = 0 
~,\quad{\rm and}\quad R \w \omega \w \omega= 0\:,
\end{align}
where on the right hand side $F$ and $R$ are  the curvatures of the connections $A$ and $\theta$, which are now, by a slight abuse of notation, restricted to the bundles $V|_X$ and  $TX$.
These are the Yang--Mills equations for holomorphic vector bundles for $V|_X$ and  $TX$.  In other words, the curvatures of $V|_X$ and  $TX$  are primitive type $(1,1)$ two forms. We will denote the antiholomorphic parts of the connections by $\vartheta = \theta^{(0,1)}$ and ${\cal A} = A^{(0,1)}$.  
Finally, by equations \eqref{eq:hetBI} and \eqref{eq:HtorX}
the heterotic Bianchi identity becomes
\be
\label{eq:SHBianchi}
-\dd \dd^c \omega=\dd H=\tfrac{\alpha'}{4}\left(\tr(F\wedge F)-\tr( R\wedge R)\right)\:.
\ee

For the sake of clarity, let us give a formal definition of the Strominger--Hull system:

\begin{definition}
\label{df:SH}
Let $X$ be a manifold with an $SU(3)$ structure $(\omega, \Psi)$, satisfying equations \eqref{eq:su3cond}, $V$ a bundle on $X$ with connection $A$, and $TX$ the tangent bundle of $X$ with connection $\theta$. A Strominger--Hull system $\big( [X,\omega,\Psi],[V,A],[TX,\theta],H \big)$  has a complex conformally balanced $SU(3)$ structure on $X$ and the curvatures $F$ and $R$ of the connections $A$ and $\theta$ respectively satisfy the holomorphic Yang--Mills conditions \eqref{eq:HolYMA}-\eqref{eq:HolYMtheta}, and the intrinsic torsion $H$ as defined by \eqref{eq:HtorX} satisfies the heterotic Bianchi identity \eqref{eq:SHBianchi}.
\end{definition}

\subsection{Reducing the operator $\cal D$}
Consider the operator $\cal D$ defined by equation \eqref{eq:calD}.
This differential operator can be restricted to a differential operator acting on 
\[
{\cal Q}\vert_X=T^*X\oplus\End(TX)\oplus\End(V)~,
\]
 where the individual bundles have been appropriately reduced. The restricted operator on $X$, which we denote ${\cal D}\vert_X$, further splits as
\begin{equation}
\label{eq:reducedD}
{\cal D}\vert_X=D+\overline D\:,
\end{equation}
where $D$ and $\overline D$ denote respectively the $(1,0)$ and $(0,1)$ part 
of ${\cal D}\vert_X$ with respect to the complex structure $J$  on $X$. We are particularly interested in the action of $\overline D$ on ${\cal Q}\vert_X$. Indeed, as we shall see, when splitting the cotangent bundle into its holomorphic and anti-holomorphic parts
\begin{equation*}
T^*X=T^{*(1,0)}X\oplus T^{*(0,1)}X\:,
\end{equation*}
by slight abuse of notation we can view $\overline D$ as an almost holomorphic structure acting on the topological sum of bundles
\begin{equation}
Q=T^{*(1,0)}X\oplus \End(TX)\oplus\End(V)\oplus T^{(1,0)}X\:. \label{eq:Q}
\end{equation}
In this section we see how this works.

The restriction of the operator $\cal D$ to $X$ can be deduced by studying the restriction on its action on sections on $Y$ with values in $\cal Q$. Indeed, the restricted operator gives a covariant derivative 
\begin{equation}
{\cal D}\vert_{X}=\dd+\Gamma_{X}\:,
\end{equation}
whose connection one forms $\Gamma_{X}$ are fully determined by their actions on sections. Moreover, to understand $\bar D$ we only need to restrict the action of $\cal D$ on a section ${\cal Z} \in\Omega^0(Y,\cal Q)$ to a corresponding action of $\overline D$ on sections with values in $Q$. Since we are interested in the restricted differential, we may assume that ${\cal Z}$ has no components in the $r$ direction. Let us also write 
\[
{\cal Z}= \begin{pmatrix}
M\\ \kappa \\ \alpha
\end{pmatrix}~.
\]
 where $M\in\Omega^0(X, T^*X)$, $\kappa\in\Omega^0(X,\End(TX))$ and $\alpha\in\Omega^0(X, \End(V))$. Then
\be
\check{\cal D}{\cal Z} = {\cal D}{\cal Z}
= \begin{pmatrix}
~\dd_\zeta M + {\cal R}(\kappa) - {\cal F}(\alpha)~\\
\dd_\theta \kappa + {\cal R}(M)\\
\dd_A \alpha - {\cal F}(M)
\end{pmatrix}~.
\label{eq:DZ}
\ee

Let $\{z^\mu,~\mu = 1, 2, 3\}$ be complex coordinates on $X$ and $\{z^{\bar\mu},~\bar\mu = 1, 2, 3\}$ their complex conjugates. The first component of equation \eqref{eq:DZ} is an element of $\Omega^1(Y, T^*Y)$.
The restriction to the anti-holomorphic part of $T^*X$ gives   
\begin{equation*}
\left({\overline D} {\cal Z}\right)_{\bar \mu}=
\left(\dd_\zeta^{(0,1)}M\right)_{\bar \mu}
+\tfrac{\alpha'}{4}\Big(
\tr\, (\kappa\, R_{\bar \mu\bar \nu}\dd z^{\bar \nu})-
\tr\, (\alpha\, F_{\bar \mu\bar \nu}\dd z^{\bar \nu})
\Big)\:.
\end{equation*}
As we have seen above, the restriction of the curvatures $F$ and $R$ to $V|_X$ and $TX$ are of type $(1,1)$. Hence the last two terms in the above expression vanish. We are therefore left with
\begin{equation*}
\left({\overline D}{\cal Z}\right)_{\bar \mu}=\bp M_{\bar \mu}-\dd z^{\bar \rho}{\Gamma_{\bar \rho\bar \mu}}^{\bar \sigma}M_{\bar \sigma}-\dd z^{\bar \rho}{\Gamma_{\bar \rho\bar \mu}}^{\sigma}M_{\sigma}=g_{\bar \mu \sigma}\bp(g^{\sigma\bar \rho}M_{\bar \rho})\:.
\end{equation*}
The last equality follows from reducing the seven dimensional $\Gamma$ symbols to six dimensions and using \eqref{eq:HtorX}. Raising with the six dimensional hermitian metric the anti-holomorphic index to a holomorphic tangent bundle index, we can instead view this result as 
\begin{equation}
g^{\bar \mu \nu}\left({\overline D}{\cal Z}\right)_{\bar \mu}=\bp M^\nu\:.
\end{equation}
Next, consider the action of $\overline D$ on the holomorphic part of the cotangent bundle
\begin{equation*}
\left({\overline D}{\cal Z}\right)_{\mu}=
\left(\dd_\zeta^{(0,1)}M\right)_{\mu}
+\tfrac{\alpha'}{4}\Big(
\tr\, (\kappa\,R_{\mu \bar \nu}\dd z^{\bar \nu})
-\tr\, (\alpha\, F_{\mu \bar \nu}\dd z^{\bar \nu})\Big)\:.
\end{equation*}
Note that in this case, the terms involving the curvatures of the bundles do not vanish. The first term in the above equation reads
\begin{equation*}
\left(\dd_\zeta^{(0,1)}M\right)_{\mu}=\bp M_{\mu}-\dd z^{\bar \nu}{\Gamma_{\bar \nu \mu}}^{\bar \rho}M_{\bar \rho}-\dd z^{\bar \nu}{\Gamma_{\bar \nu \mu}}^{\rho}M_{\rho}\:.
\end{equation*}
As it turns out, the connection symbols ${\Gamma_{\bar \nu \mu}}^{\rho}$ of the reduced connection vanish. 
Hence, we can write this as 
\begin{equation*}
\left(\dd_\zeta^{(0,1)}M\right)_{\mu}=\bp M_{\mu}+\dd z^{\bar \nu}H_{\bar \nu \mu \rho}M^{\rho}\:.
\end{equation*}
Thus, we have 
\begin{equation}
\left({\overline D}{\cal Z}\right)_{\mu}
=\bp M_{\mu}+\dd z^{\bar \nu}H_{\bar \nu \mu \rho}M^{\rho}
+\tfrac{\alpha'}{4}\,
\Big(\tr\, (\kappa\, R_{\mu \bar \nu}\dd z^{\bar \nu})
-\tr\, (\alpha\, F_{\mu \bar \nu}\dd z^{\bar \nu})\Big)\:.
\end{equation}
Finally, we also have the action of $\overline D$ on the bundle valued part of the bundle, that is, we need to consider the reduction of the second  and third components of equation \eqref{eq:DZ}. Let us denote the endomorphism valued indices of the bundle by $\{\alpha,\beta,..\}$. 
This gives
\begin{equation}
\left({\overline D}{\cal Z}\right)^\beta=
{\dd_A^{(0,1)}}\alpha^\beta
+ M^{\mu}\, F^\beta_{\mu\bar \nu}\dd z^{\bar \nu}=\bp_A\alpha^\beta
+ M^{\mu}\,F^\beta_{\mu\bar \nu}\dd z^{\bar \nu}\:,
\end{equation}
where we denote ${\dd_A^{(0,1)}}=\bp_{\cal A} = \bp + [{\cal A}, ]$, and ${\cal A}=A^{(0,1)}$. The action is similar for the $\End(TX)$-part of the bundle.

Thus, in summary we find that $\check{\cal D}{\cal Z} = {\cal D}{\cal Z}$ restrict to $\overline{D} Z$ with
\be
{Z} = \begin{pmatrix}
W_t \\ \kappa_t \\ \alpha_t \\ M_t
\end{pmatrix}
~,\label{eq:Z}
\ee
where, by slight abuse of notation, we now have $W_t\in\Omega^1(X, T^{*(1,0)}X)$, $\alpha_t\in\Omega^1(X, {\rm End}(V))$, $\kappa_t\in\Omega^1(X,{\rm End}(TX))$ and $M_t\in\Omega^1(X, T^{(1,0)}X)$. In other words, as claimed above, $Z$ is an element of the bundle $Q$ of \eqref{eq:Q}, and $\overline{D} $ defines an almost holomorphic structure on this bundle.

\subsection{The Strominger--Hull system and the operator $\overline{D}$}
We would like to make a few remarks on the above  restriction of the heterotic $G_2$ system $([Y,\varphi], [V, A], [TY, \theta], H)$ to six dimensions. We have shown that, upon restriction to $Y= \mathbb{R} \times X$ the differential operator $\check{\cal D} \to D + \overline{D}$,  and that $\overline{D}$ acts on a holomorphic bundle
\[
Q=T^{*(1,0)}X\oplus \End(TX)\oplus\End(V)\oplus T^{(1,0)}X\:,
\]
which is exactly the extension bundle constructed for the six-dimensional Strominger--Hull system in Refs \cite{delaOssa:2014cia,Anderson:2014xha}. More precisely,  $\overline{D}$ matches the upper triangular operator of the same name defined in \cite{delaOssa:2014cia}. As such, it defines a holomorphic structure on $Q$ and maps
\[
\overline{D}: \Omega^{(p,q)} (X, Q) \to \Omega^{(p,q+1)} (X,Q) \; .
\]

The six-dimensional extension bundle $Q$, which has, since ${\overline{D}}^2=0$, a flat connection, is closely related to the heterotic Courant algebroids defined in Refs \cite{Baraglia:2013wua,Garcia-Fernandez:2013gja}. This relation is discussed in some detail in Ref \cite{Anderson:2014xha}, which also provides a succinct introduction to the literature on heterotic and transitive Courant algebroids (see also \cite{bouwknegt-string-math}). In brief, the Bianchi identity of the heterotic anomaly cancellation condition defines a Courant algebroid built from the vector bundle $TX \oplus {\rm ad}P \oplus T^*X$, where we have introduced the direct sum bundle $P = V \oplus TX$, and this has been used \cite{2013JGP....73..150R,Baraglia:2013wua,Garcia-Fernandez:2013gja} to connect heterotic supergravity solutions to Hitchin's generalised geometry \cite{Hitchin:2004ut,g04}.  The heterotic Courant algebroid has also been used to determine infinitesimal deformations of to heterotic compactifications to four and three dimensions \cite{Garcia-Fernandez:2015hja, Clarke:2016qtg}. Here the holomorphic extension bundle $(Q,\bar D)$ forms a crucial part of what the authors denote a {\it holomorphic} Courant algebroid, which can be defined when the heterotic supersymmetry conditions in of the Strominger-Hull system are satisfied. It is interesting to ask whether a similar object can be defined for the heterotic $G_2$ system. 

It is also interesting to note that the extension bundle $Q$ on a six dimensional manifold $X$, which contains both the holomorphic tangent $T^{(1,0)}X$ and cotangent bundle $T^{*(1,0)}X$, embeds into a bundle ${\cal Q}$ on a real seven dimensional manifold $Y$, which only contains the cotangent bundle $T^*Y$.  This provides a perspective on heterotic $G_2$ systems that is not readily implied by a heterotic Courant algebroid like the one constructed in \cite{Clarke:2016qtg}.  While it should be possible to relate the two perspectives, and repackage ${\cal (Q, D)}$ into an algebroid structure, we can also turn the question around and explore what the real perspective implies for the six-dimensional Strominger--Hull system. This is the aim of the next section.

\section{A corollary of theorem 1}

We saw above how to recover the holomorphic differential operator defined in \cite{delaOssa:2014cia}. Let us now return to the viewpoint of viewing ${\cal D}\vert_X$ as a {\it real operator} on the bundle ${\cal Q}\vert_X=T^*X\oplus\End(TX)\oplus\End(V)$, where now
\begin{equation}
T^*X=T^{*(1,0)}X\oplus T^{*(0,1)}X\:.
\end{equation}
With this, we now consider the consequences of Theorem \ref{tm:alpha} when reducing to six dimensions. The restricted exterior derivative ${\cal D}\vert_X$ has a curvature
\begin{equation}
F_{{\cal D}\vert_X}={\cal D}\vert^2_X=D^2+D\overline D+\overline D D+\overline D^2\:.
\end{equation}
Furthermore, imposing that the operator $\cal{D}$ satisfies the instanton condition 
\be \nn \check{\cal D}^2=\psi \w  F_{\cal D}=0
\ee
 in seven dimensions implies that the operator ${\cal D}\vert_X$ corresponds to holomorphic Yang--Mills connection, that is
\begin{equation} \nn
F_{{\cal D}\vert_X} \w \Psi = 0 \; , \quad
F_{{\cal D}\vert_X} \w \bar{\Psi} = 0 \; , \quad
\omega\wedge\omega\wedge F_{{\cal D}\vert_X}=0
\; .
\end{equation}

From Theorem \ref{tm:alpha} we then derive the following corollary

\begin{corollary}
\label{cor:alpha}
Let $(X,\Psi,\omega)$ be a manifold with an $SU(3)$ structure, $V$ a bundle on $X$ with connection $A$, and $TX$ the tangent bundle of $X$ with connection $\theta$. Let $\zeta$ be a metric connection with connection symbols given by \eqref{eq:zetaconn}, where $H=-\,\dd^c\omega$. Then the heterotic $SU(3)$ system $\left([X,\Psi,\omega],[V,A],[TX,\theta],H\right)$ satisfies the Strominger-Hull system, that is equations \eqref{eq:complexX}-\eqref{eq:SHBianchi} if and only if the operator ${\cal D}\vert_X$ defined by equation \eqref{eq:reducedD} satisfies the holomorphic Yang-Mills equations.
\end{corollary}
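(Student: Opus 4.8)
The plan is to derive the corollary directly from Theorem~\ref{tm:alpha} together with the dimensional reduction of Section~3. Writing $Y=\IR\times X$ with all fields independent of the coordinate $r$, I would assemble a chain of equivalences: a Strominger--Hull system on $X$ is equivalent to a heterotic $G_2$ system on $Y$ (the reduction of Section~3 together with its lift); by Theorem~\ref{tm:alpha} this is equivalent to $\check{\cal D}^2=0$; and, as recorded above, $\check{\cal D}^2=\psi\w F_{\cal D}$, so this is the instanton condition $\psi\w F_{\cal D}=0$ on the curvature of $\cal D$, which finally reduces to the holomorphic Yang--Mills equations for ${\cal D}\vert_X$. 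Reading the chain in both directions establishes the stated ``if and only if''.

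First I would confirm that the reduction of Section~3 is a genuine equivalence. The forward implication — that a heterotic $G_2$ system on $Y$ restricts to a Strominger--Hull system on $X$ — is exactly equations \eqref{eq:complexX}--\eqref{eq:SHBianchi}. For the converse I would run the same computation backwards: given $([X,\Psi,\omega],[V,A],[TX,\theta],H)$ obeying the Strominger--Hull conditions, the product structure $\varphi=\dd r\w\omega+{\rm Re}(\Psi)$, $\psi=-\dd r\w{\rm Im}(\Psi)+\tfrac12\,\omega\w\omega$ with the pulled-back bundles and connections defines a heterotic $G_2$ system on $Y$. Here the complex condition \eqref{eq:complexX} and conformal balancing \eqref{eq:confbal} yield integrability ($\tau_2=0$), the holomorphic Yang--Mills conditions \eqref{eq:HolYMA}--\eqref{eq:HolYMtheta} yield the $G_2$ instanton conditions \eqref{eq:inst}, and \eqref{eq:SHBianchi} is the reduction of the Bianchi identity \eqref{eq:hetBI}. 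Since the intrinsic torsion reduces as $H=-\dd^c\omega$ by \eqref{eq:HtorX}, the connection $\zeta$ fixed via \eqref{eq:zetaconn} in the corollary coincides with the one required by Theorem~\ref{tm:alpha}, and the theorem then supplies the equivalence with $\check{\cal D}^2=0$.

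The central step is to reduce $\psi\w F_{\cal D}=0$ to holomorphic Yang--Mills on $X$. As all fields are $r$-independent, the curvature $F\equiv F_{{\cal D}\vert_X}$ carries no $\dd r$ leg and is a two-form on $X$ valued in $\End({\cal Q}\vert_X)$. Substituting $\psi=-\dd r\w{\rm Im}(\Psi)+\tfrac12\,\omega\w\omega$ and separating the $\dd r$ and transverse parts, $\psi\w F_{\cal D}=0$ splits into ${\rm Im}(\Psi)\w F=0$ and $\omega\w\omega\w F=0$. Working entrywise in $\End({\cal Q}\vert_X)$ and using that on the complex three-fold $X$ any form of bidegree outside $0\le p,q\le 3$ vanishes, I would decompose $F=F^{(2,0)}+F^{(1,1)}+F^{(0,2)}$ and note that $\Psi\w F=\Psi\w F^{(0,2)}$ has type $(3,2)$ while $\bar\Psi\w F=\bar\Psi\w F^{(2,0)}$ has type $(2,3)$. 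As these sit in distinct bidegrees, ${\rm Im}(\Psi)\w F=0$ is equivalent to $\Psi\w F=0$ and $\bar\Psi\w F=0$ separately, i.e.\ to $F^{(0,2)}=F^{(2,0)}=0$; and, since $\omega\w\omega\w F=\omega\w\omega\w F^{(1,1)}$, the second condition is the primitivity of the surviving $(1,1)$ part. These are precisely the holomorphic Yang--Mills equations $F\w\Psi=F\w\bar\Psi=\omega\w\omega\w F=0$, which closes the chain.

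I expect the main obstacle to be precisely this last reduction, carried out for the full $\End({\cal Q}\vert_X)$-valued curvature rather than for the individual bundle curvatures $F$ and $R$: one must track how the off-diagonal maps $\cal R$ and $\cal F$ and the connection $\zeta$ reorganise under the splitting $T^*X=T^{*(1,0)}X\oplus T^{*(0,1)}X$, and check that the $r$-independence assumption truly removes every $\dd r$ component of $F_{\cal D}$. It is worth stressing that the resulting statement is strictly stronger than the holomorphic result ${\overline D}^2=0$ of Section~3.2: the seven-dimensional instanton condition forces not only $F^{(0,2)}=0$ (the integrable holomorphic structure) but also $F^{(2,0)}=0$ and primitivity of $F^{(1,1)}$, so the passage from a holomorphic structure to a full holomorphic Yang--Mills connection comes for free from the real seven-dimensional viewpoint.
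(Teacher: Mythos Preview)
Your proposal is correct and follows the same route as the paper: the paper's own proof is a single sentence declaring the corollary a direct consequence of Theorem~\ref{tm:alpha} under the reduction $Y=\IR\times X$ with maximally symmetric four-dimensional spacetime and $r$-independent fields, and your chain of equivalences (Strominger--Hull on $X$ $\Leftrightarrow$ heterotic $G_2$ on $Y$ $\Leftrightarrow$ $\check{\cal D}^2=\psi\w F_{\cal D}=0$ $\Leftrightarrow$ holomorphic Yang--Mills for ${\cal D}\vert_X$) is exactly that reduction spelled out. Your bidegree argument separating ${\rm Im}(\Psi)\w F=0$ into $\Psi\w F=0$ and $\bar\Psi\w F=0$, and hence $F^{(2,0)}=F^{(0,2)}=0$, supplies detail the paper leaves implicit, and your flagged concern about verifying that $F_{\cal D}$ has no $\dd r$ leg is handled in the paper simply by assumption.
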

\begin{proof} 
This corollary is a direct consequence of Theorem \ref{tm:alpha}, when reducing it to six dimensions as in \eqref{eq:Yred}, and requiring that the resulting four-dimensional spacetime is maximally symmetric (which implies that the solutions are four dimensional Minkowski).  We also assume no quantity depends on the coordinate $r$ and that the fields have no components in the $r$-direction.
\end{proof}

We can then conclude that this real perspective on $d$-dimensional heterotic systems translates the differential constraints on the system into the existence of an operator ${\cal D}_d$ acting on ${\cal Q}_d$-bundle valued forms. Furthermore, $({\cal Q}_d,{\cal D}_d)$ satisfies the same constraint as does the vector bundle $(V_d,A_d)$  that describes the gauge sector of the heterotic system in $d$ dimensions. 
In this note we have identified $({\cal Q}_6,{\cal D}_6) =({\cal Q}|_X,{\cal D}|_X)$ and $({\cal Q}_7,{\cal D}_7) =({\cal Q},{\cal D})$ for six and seven dimensional heterotic systems,  respectively.  It is tempting to speculate that $({\cal Q}_d,{\cal D}_d)$ with the same property exist  for heterotic systems of other dimensions.

Let us also speculate about another possible consequence of corollary \ref{cor:alpha}. The Donaldson-Uhlenbeck-Yau theorem \cite{donaldson1985anti, uhlenbeck1986existence} and Li--Yau theorem \cite{Yau87} state that holomorphic {\it polystable bundles of zero slope} always admit connections that solve the hermitian Yang--Mills equations. Thus, instead of solving the hermitian Yang--Mills equations explicitly, we may use algebraic geometry to check whether the bundle satisfies certain stability constraints. It has been speculated by Yau and others that an analogous property holds for the full heterotic system of equations \cite{yau2010metrics, garcia2016lectures}. If such a theorem can be proven, one would be able to prove existence of solutions of the Strominger--Hull system by checking algebraic conditions. Our observation that the Strominger-Hull system can be rephrased in terms of the holomorphic Yang-Mills connection ${\cal D}|_X$ on a bundle ${\cal Q}|_X$ could potentially help further explorations in this direction. 

The restriction of heterotic $G_2$ systems that we have presented here gives a new perspective on the moduli of the Strominger--Hull system, that we will explore in an upcoming publication. It would also be interesting to use the restriction techniques of this note to explore the connections between heterotic systems in seven and six dimensions, without the additional constraints on the $r$-dependence of the fields assumed in this note. This would provide insight into the geometry and moduli of other types of $SU(3)$ structure compactifications of the heterotic string, in the spirit of refs. \cite{Gray:2012md,delaOssa:2014lma}. We hope to come back to this in the future.

%%%%%%%%%%%%%%%%%%%%%%%%%
\section*{Acknowledgements}
XD would like to thank the organisers of the Hitchin 70 conference for the opportunity to talk about some of the work presented here.
XD would like to acknowledge  the organisers of the 2017 Villa de Leyva Summer School on Geometric, Algebraic and Topological Methods for Quantum Field Theory during which some of this work was written. 
All the authors thank Marc-Antoine Fiset and Mario Garcia-Fernandez for discussions.
The work of XD is supported in part by the EPSRC grant EP/J010790/1. ML's research is financed by the Swedish Research Council (VR) under grant number 2016-03873. The work of EES, made within the Labex Ilp (reference Anr-10-Labx-63), was supported by French state funds managed by the Agence nationale de la recherche, as part of the programme Investissements d'avenir under the reference Anr-11-Idex-0004-02.

%\appendix
%%%%%%%%%%%%%%%%%%%%%%%%%
%\section{Definitions}\label{app:def}

\bibliographystyle{JHEP}

\bibliography{bibliography}
\end{document}